\documentclass[10pt,a4paper]{amsart}
\usepackage[margin=20mm]{geometry}
\usepackage[numbers]{natbib}
\usepackage{hyperref}

\usepackage{amssymb,amsmath}
\usepackage{amsthm}
\usepackage{bbm}
\usepackage{stmaryrd}
\usepackage[usenames,dvipsnames]{xcolor}
\usepackage{color}

\input xy
\xyoption{all} 


\numberwithin{equation}{section}

\newtheorem{theorem}{Theorem}[section]

\newtheorem{proposition}[theorem]{Proposition}

\theoremstyle{definition}
\newtheorem{definition}[theorem]{Definition}
\newtheorem{remark}[theorem]{Remark}
\newtheorem{example}[theorem]{Example}

\newtheorem{construction}[theorem]{Construction}

\newenvironment{warning}[1][Warning.]{\begin{trivlist}
\item[\hskip \labelsep {\bfseries #1}]}{\end{trivlist}}

\newcommand{\rmd}{\textnormal{d}}

\DeclareMathOperator{\Vect}{Vect}

\font\black=cmbx10 \font\sblack=cmbx7 \font\ssblack=cmbx5 \font\blackital=cmmib10  \skewchar\blackital='177
\font\sblackital=cmmib7 \skewchar\sblackital='177 \font\ssblackital=cmmib5 \skewchar\ssblackital='177
\font\sanss=cmss10 \font\ssanss=cmss8 
\font\sssanss=cmss8 scaled 600 \font\blackboard=msbm10 \font\sblackboard=msbm7 \font\ssblackboard=msbm5
\font\caligr=eusm10 \font\scaligr=eusm7 \font\sscaligr=eusm5  \font\fraktur=eufm10
\font\sfraktur=eufm7 \font\ssfraktur=eufm5 
\font\bsymb=cmsy10 scaled\magstep2
\def\all#1{\setbox0=\hbox{\lower1.5pt\hbox{\bsymb
       \char"38}}\setbox1=\hbox{$_{#1}$} \box0\lower2pt\box1\;}
\def\exi#1{\setbox0=\hbox{\lower1.5pt\hbox{\bsymb \char"39}}
       \setbox1=\hbox{$_{#1}$} \box0\lower2pt\box1\;}

\def\tx#1{{\fam0\relax#1}}

\newfam\bifam
\textfont\bifam=\blackital \scriptfont\bifam=\sblackital \scriptscriptfont\bifam=\ssblackital

\newfam\blfam
\textfont\blfam=\black \scriptfont\blfam=\sblack \scriptscriptfont\blfam=\ssblack

\newfam\bbfam
\textfont\bbfam=\blackboard \scriptfont\bbfam=\sblackboard \scriptscriptfont\bbfam=\ssblackboard

\newfam\ssfam
\textfont\ssfam=\sanss \scriptfont\ssfam=\ssanss \scriptscriptfont\ssfam=\sssanss
\def\sss#1{{\fam\ssfam\relax#1}}

\newfam\clfam
\textfont\clfam=\caligr \scriptfont\clfam=\scaligr \scriptscriptfont\clfam=\sscaligr

\newfam\frfam
\textfont\frfam=\fraktur \scriptfont\frfam=\sfraktur \scriptscriptfont\frfam=\ssfraktur

\def\hpb#1{\setbox0=\hbox{${#1}$}
    \copy0 \kern-\wd0 \kern.2pt \box0}
\def\vpb#1{\setbox0=\hbox{${#1}$}
    \copy0 \kern-\wd0 \raise.08pt \box0}

\def\pmb#1{\setbox0\hbox{${#1}$} \copy0 \kern-\wd0 \kern.2pt \box0}
\def\pmbb#1{\setbox0\hbox{${#1}$} \copy0 \kern-\wd0
      \kern.2pt \copy0 \kern-\wd0 \kern.2pt \box0}
\def\pmbbb#1{\setbox0\hbox{${#1}$} \copy0 \kern-\wd0
      \kern.2pt \copy0 \kern-\wd0 \kern.2pt
    \copy0 \kern-\wd0 \kern.2pt \box0}
\def\pmxb#1{\setbox0\hbox{${#1}$} \copy0 \kern-\wd0
      \kern.2pt \copy0 \kern-\wd0 \kern.2pt
      \copy0 \kern-\wd0 \kern.2pt \copy0 \kern-\wd0 \kern.2pt \box0}
\def\pmxbb#1{\setbox0\hbox{${#1}$} \copy0 \kern-\wd0 \kern.2pt
      \copy0 \kern-\wd0 \kern.2pt
      \copy0 \kern-\wd0 \kern.2pt \copy0 \kern-\wd0 \kern.2pt
      \copy0 \kern-\wd0 \kern.2pt \box0}


\mathchardef\za="710B  
\mathchardef\zb="710C  
\mathchardef\zg="710D  
\mathchardef\zd="710E  
\mathchardef\zve="710F 
\mathchardef\zz="7110  
\mathchardef\zh="7111  
\mathchardef\zvy="7112 
\mathchardef\zi="7113  
\mathchardef\zk="7114  
\mathchardef\zl="7115  
\mathchardef\zm="7116  
\mathchardef\zn="7117  
\mathchardef\zx="7118  
\mathchardef\zp="7119  
\mathchardef\zr="711A  
\mathchardef\zs="711B  
\mathchardef\zt="711C  
\mathchardef\zu="711D  
\mathchardef\zvf="711E 
\mathchardef\zq="711F  
\mathchardef\zc="7120  
\mathchardef\zw="7121  
\mathchardef\ze="7122  
\mathchardef\zy="7123  
\mathchardef\zf="7124  
\mathchardef\zvr="7125 
\mathchardef\zvs="7126 
\mathchardef\zf="7127  
\mathchardef\zG="7000  
\mathchardef\zD="7001  
\mathchardef\zY="7002  
\mathchardef\zL="7003  
\mathchardef\zX="7004  
\mathchardef\zP="7005  
\mathchardef\zS="7006  
\mathchardef\zU="7007  
\mathchardef\zF="7008  
\mathchardef\zW="700A  
\mathchardef\zC="7009  

\newcommand{\be}{\begin{equation}}
\newcommand{\ee}{\end{equation}}

\newcommand{\bea}{\begin{eqnarray}}
\newcommand{\eea}{\end{eqnarray}}
\def\*{{\textstyle *}}
\newcommand{\R}{{\mathbb R}}

\newcommand{\Z}{{\mathbb Z}}

\newcommand{\s}{{\textstyle *}}







\def\Vect{\sss{Vect}}




\def\sT{{\sss T}}

\def\xi{\tx{i}}


\def\s*{{\scriptstyle *}}

\def\cO{\mathcal{O}}


\newcommand{\beas}{\begin{eqnarray*}}
\newcommand{\eeas}{\end{eqnarray*}}

\def\half{\frac{1}{2}}

\title{The super-Sasaki metric on the antitangent bundle }

   \author{Andrew James Bruce} 
   \address{Mathematics Research Unit, University of Luxembourg, Maison du Nombre 6, avenue de la Fonte, 
L-4364 Esch-sur-Alzette}  
   \email{andrewjamesbruce@googlemail.com}
  

\date{\today}
 \begin{document}

\begin{abstract}
We show how to lift a Riemannian metric and almost symplectic form on a manifold  to a Riemannian structure on a canonically associated supermanifold known as the antitangent or shifted tangent bundle. We view this construction as a generalisation of Sasaki's construction of a Riemannian metric on the tangent bundle of a Riemannian manifold.\par
\smallskip\noindent
{\bf Keywords:} 
Riemannian supermanifolds;~almost symplectic structures, differential forms;~ Hermitian structures.\par
\smallskip\noindent
{\bf MSC 2010:}~53D15;~58A10;~58A50;~58B20.
\end{abstract}

 \maketitle

\setcounter{tocdepth}{2}

\section{Introduction} 
In this note, we show how one can, in a canonical and well-defined way, construct an even Riemannian metric on the supermanifold $ \Pi \sT M$ provided that the  (pure even) manifold $M$ is equipped with a Riemannian (or pseudo-Riemannian) structure and an almost symplectic structure (see Construction \ref{cons:RiemStru}). This construction first appears in an earlier paper by the author  (see \cite{Bruce:2020}) as an example of a Riemannian supermanifold, albeit details are missing from that paper.   We view this construction as giving a super-version of a Sasaki metric on the antitangent rather than the tangent bundle \cite{Sasaki:1958}.  We then briefly investigate some of the properties of the constructed metric. Recall that differential forms on the manifold $M$ are functions on the supermanifold $\Pi \sT M$.  Furthermore, the associated Cartan calculus has a neat supergeometric formulation as vector fields on the said supermanifold. Thus, we examine the constructed metric on the de Rham differential, the interior derivative and the Lie derivative (see Proposition \ref{prop:Cartan}). In doing so, we recover aspects of the underlying Riemannian and almost symplectic geometry. \par 
As a concrete and possibly physically interesting example, we show that the antitangent bundle of Misner space \cite{Misner:1967} comes canonically equipped with a super-Sasaki metric.  We explicitly construct the super-Sasaki metric for this example. As far as we know, this metric is completely new. For details see Example \ref{exp:Misner}. \par 
It is well-known that any smooth manifold can be equipped with a Riemannian metric, i.e., there are no topological obstructions to the existence of a metric. The same is not true for pseudo-Riemannian structures; there are certain topological obstructions. Similarly, not every smooth manifold can be equipped with an almost symplectic form, i.e., a non-degenerate two form that is not necessarily closed.  For one, the non-degeneracy forces the dimension of the manifold to be even.  A necessary and sufficient condition for the existence of an almost symplectic structure on a manifold is the possibility of reducing the structure group of its tangent bundle to a unitary group.  This forces the odd-dimensional Stiefel--Whitney classes of the manifold to vanish, see Libermann \cite{Libermann:1955} for more details.  Thus it is far from true that our construction can be applied to any manifold. We require the assumption that we have at least an almost symplectic structure. A natural class of almost symplectic manifolds are, or course, symplectic manifolds. The integrability condition, i.e., being closed, plays no r\^{o}le in our constructions. For examples of almost symplectic manifolds see Vaisman \cite{Vaisman:2013}. Examples of manifolds that have both a Riemannian and almost symplectic structures include almost Hermitian manifolds and almost K\"{a}hler  manifolds.  In these examples there is an additional compatible almost complex structure. In fact, it is well-known that any almost symplectic manifold equipped with a Riemannian metric admits a compatible  canonical almost complex structure, such manifolds are thus canonically almost Hermitian manifolds (for details see, for example \cite[Chapter 4]{McDuff:2017}). However, almost complex structures themselves  play no direct r\^{o}le in our construction of Sasaki-like metrics on the antitangent bundle. We will briefly comment further on this in the closing section of this note.\par 
It remains to properly explore the interrelations between the geometries of the initial almost Hermitian manifold and the lifted Riemannian geometry on the antitangent bundle.  It is an open question as to what results from the classical study of Sasaki metrics directly generalise to  super-Sasaki metrics. We have only scratched the surface here.\par 

We remark that the classical Sasaki metric makes a natural appearance in geometric mechanics. Specifically, one can consider the configuration space of a system equipped with the Jacobi metric and then, in turn, the tangent bundle of the configuration space equipped with the associated Sasaki metric. Trajectories can be viewed as geodesics on the configuration space or its tangent bundle. For more details the reader can consult \cite[Chapter 3]{Pettini:2007}. Thus, we speculate that the super-Sasaki metric may find applications in geometric approaches to supermechanics along the lines of \cite{Bruce:2017}.

\medskip

\section{Riemannian supermanifolds}\label{sec:RiemSup}
 We will assume that the reader is familiar with the basic theory of supermanifolds. In this note we will understand a \emph{supermanifold} $M := (|M|, \:  \cO_{M})$ of dimension $n|m$  to be a supermanifold as defined by Berezin \& Leites  \cite{Berezin:1976}, i.e., as a locally superringed space that is locally isomorphic to $\mathbb{R}^{n|m} := \big (\R^{n}, C^{\infty}(\R^{n})\otimes \Lambda(\zx^{1}, \cdots \zx^{m}) \big)$. Here, $\Lambda(\zx^{1}, \cdots \zx^{m})$ is the Grassmann algebra (over $\R$) with $m$ generators. Given any point on $|M|$ we can always find a `small enough' open neighbourhood $|U|\subseteq |M|$ such that we can  employ local coordinates $x^{a} := (x^{\mu} , \zx^{i})$ on $M$. The (global) sections of the structure sheaf we will refer to as \emph{functions}. The  supercommutative algebra of functions we will denote as $C^{\infty}(M)$. The underlying smooth manifold $|M|$ we refer to as the \emph{reduced manifold}.  We will make heavy use of local coordinates on supermanifolds and employ the standard abuses of notation when it comes to morphisms and changes of coordinates.  The Grassmann parity of an object $A$ will be denoted by `tilde', i.e., $\widetilde{A} \in \Z_{2}$.  By `even' or `odd' we will be referring to the  Grassmann parity of an object. \par
The \emph{tangent sheaf} $\mathcal{T}M$ of a supermanifold $M$ is  defined as the sheaf of derivations of sections of the structure sheaf. Naturally, this is  a sheaf of locally free $\cO_{M}$-modules. Global sections of the tangent sheaf are \emph{vector fields}. We denote the $\cO_{M}(|M|)$-module of vector fields as $\Vect(M)$. The total space of the tangent sheaf $\sT M$ we will refer to  as the \emph{tangent bundle}.  By shifting the parity of the fibre coordinates one obtains the \emph{antitangent bundle} $\Pi \sT M$.  We will also encounter \emph{double supervector bundles}, namely $\sT(\Pi \sT M)$. The notion original of  double vector bundles from the point of view of category theory is due to Pradines \cite{Pradines:1974}. An approach closer to classical differential geometry was formulated by   Konieczna \& Urba\'{n}ski \cite{Konieczna:1999}.\par 
We proceed to present the bare minimum of the theory of Riemannian supermanifolds as needed in this note. Relatively recent papers on Riemannian supergeometry include \cite{Galaev:2012,Garnier:2014,Garnier:2012,Goertsches:2008,Groeger:2014,Klaus:2019}. We suggest  Carmeli, Caston \& Fioresi \cite{Carmeli:2011},  Manin \cite{Manin:1997}  and Varadarajan \cite{Varadarajan:2004} as general references for the theory of supermanifolds. The encyclopedia edited by Duplij, Siegel \& Bagger  \cite{Duplij:2004} is also very helpful, as is the review paper by Leites \cite{Leites:1980}.   
\begin{definition}\label{def:RiemMet}
A  \emph{Riemannian metric} on a supermanifold $M$ is grading preserving, symmetric, non-degenerate, $\cO_M$-linear morphisms of sheaves
$$\mathcal{T}M \otimes_{\cO_M} \mathcal{T}M \longrightarrow \cO_M.$$
A supermanifold equipped with a Riemannian metric is referred to as a \emph{Riemannian supermanifold}
\end{definition}
\noindent A Riemannian metric has the following properties:
\begin{enumerate}
\setlength\itemsep{0.5em}
\item$\widetilde{\langle X| Y \rangle_g} = \widetilde{X} + \widetilde{Y}$,
\item $\langle X| Y \rangle_g = (-1)^{\widetilde{X} \, \widetilde{Y}} \langle Y| X \rangle_g$,
\item If $\langle X| Y \rangle_g =0$  for all $Y \in \Vect(M)$, then $X =0$,
\item $\langle f X + Y| Z \rangle_g = f \langle X|Z \rangle_g + \langle Y| Z\rangle_g$, 
\end{enumerate} 
for arbitrary  (homogeneous) $X,Y,Z \in \Vect(M)$ and $f \in C^\infty(M)$.  \par  
Furthermore, it can be shown that a Riemannian metric on  a supermanifold $M$  induces a pseudo-Riemannian metric on the reduced manifold $|M|$. 
\begin{remark}
Odd Riemannian structures can also be defined. There are no changes to  Definition \ref{def:RiemMet}, except that  the parity is shifted, i.e., $\widetilde{\langle X| Y \rangle} = \widetilde{X} + \widetilde{Y} +1$. The non-degeneracy condition  forces the number of even and odd dimensions to be equal.  We will not encounter odd Riemannian  structures in this paper. 
\end{remark}
A Riemannian metric is completely specified by an even degree two function $g \in C^\infty(\sT M)$, i.e., a Grassmann degree zero rank $2$ symmetric covariant tensor. It is standard to refer $g$ as the metric tensor, or via minor abuse of language, as the Riemannian metric.   In local coordinates, we write
$$g(x, \dot{x}) = \dot{x}^a \dot{x}^b \,g_{ba}(x).$$  
Under changes of local coordinates $x^a \mapsto x^{a'}(x)$ the components of the metric transform as 
$$g_{b'a'}(x') = (-1)^{\widetilde{a}' \, \widetilde{b}} \left(\frac{\partial x^b}{\partial x^{b'}}\right) \left(\frac{\partial x^a}{\partial x^{a'}}\right)~  g_{ab}\,,$$
where we have used the symmetry $g_{ab} = (-1)^{\widetilde{a} \, \widetilde{b}} \, g_{ba}$.\par 
We denote the vertical lift of a vector field as $\iota_X$, which in local coordinates is given by
$$X = X^a(x)\frac{\partial}{\partial x^a} \rightsquigarrow \iota_X :=  X^{a}(x)\frac{\partial}{\partial \dot{x}^a} \in \Vect(\sT M)\,.$$
It is easy to   observe that
$$\langle X| Y \rangle_g = \frac{1}{2} \iota_X \iota_Y g\,,$$ 
which leads us to the local expression 
$$  \langle X| Y \rangle_g =  (-1)^{\widetilde{Y}\, \widetilde{a}} ~ X^a(x)Y^b(x) g_{ba}(x).$$
A direct computation will show that the above local expression for the metric pairing is invariant under changes of coordinates, we leave this as an exercise for the reader. \par 
The non-degeneracy condition of the Riemannian metric means that the dimension of the supermanifold $M$  must be $n | 2\,p$, that is, we require the number of odd ``directions'' to be even.  Just about all the standard constructions of classical Riemannian geometry generalise to Riemannian supermanifolds. For example, the fundamental theorem remains true (see for example \cite{Monterde:1996}). 
\begin{remark}
DeWitt \cite[Section 2.8]{DeWitt:1992} presents Riemannian geometry on DeWitt--Rogers $H^\infty$-supermanifolds. Although some care is needed in translating between supermanifolds (as locally ringed spaces) and DeWitt--Rogers supermanifolds, most of the expressions given by DeWitt on Riemannian structures remain valid in the locally ringed space approach to Riemannian supergeometry.
\end{remark}

\section{Construction of the Super-Sasaki metric}
\begin{warning}
From this point on $M$ will be a purely even manifold.
\end{warning}
Let $M$ be an almost symplectic manifold, i.e., a manifold equipped with a non-degenerate two-form $\omega$, we do not assume that this two-form is closed. The non-degeneracy means that the dimension of $M$ must be even. Furthermore, let us assume that $M$ is equipped with a Riemannian metric, which we will denote as $h$. We do \emph{not} require any kind of compatibility condition between the almost symplectic structure $\omega$ and the Riemannian structure $h$. From this initial data, we proceed to build a Riemannian metric (see Definition \ref{def:RiemMet}) on the supermanifold $\Pi \sT M$ in the following way.
\begin{construction}\label{cons:RiemStru}
Let $(M, h, \omega)$ be a smooth manifold equipped with a Riemannian metric and an almost symplectic structure.  The double supervector bundle $\sT(\Pi \sT M)$  we equip with natural (local) coordinates  $(x^a, \rmd x^b, \dot{x}^c , \rmd \dot{x}^d)$. Admissible changes of coordinates are of the form
\begin{align*}
&x^{a'} = x^{a'}(x), & \rmd x^{b'} = \rmd x^a \frac{\partial x^{b'}}{\partial x^a},\\
&\dot{x}^{c'} = \dot{x}^b \frac{\partial x^{c'}}{\partial x^b},& \rmd\dot{x}^{d'} = \rmd\dot{x}^{c}  \frac{\partial x^{b'}}{\partial x^c} + \dot{x}^{b} \rmd{x}^c \frac{\partial^2 x^{d'}}{ \partial x^c \partial x^b}.
\end{align*}
The Levi-Civita connection $\nabla$ associated with the Riemannian metric induces a splitting
$$\sT (\Pi \sT M) \stackrel{\phi_h}{\xrightarrow{\hspace*{25pt}}} \Pi \sT M \times_M \sT M \times_M \Pi \sT M\,,$$
which we write in natural coordinates as
$$\phi^*_h \zx^a = \rmd\dot{x}^a + \rmd{x}^b \dot{x}^c \Gamma^a_{cb}(x) =: \nabla \dot{x}^a.$$
In the above, $\zx^a$ are the (fibre) coordinates on the last factor of the decomposed double supervector bundle.  Note that while the target supermanifold of the splitting is canonical, the splitting itself is not canonical and requires an affine connection (see Remark \ref{rem:OtherConnections}).  The splitting $\phi_h$ acts as the identity on the remaining coordinates, i.e., we choose fibre coordinates $\rmd x$ and $\dot{x}$ on the first and second factors, respectively.\par 
On the decomposed  double supervector bundle $\Pi \sT M \times_M \sT M \times_M \Pi \sT M$ we can take the sum of the Riemannian metric and the almost symplectic structure. This sum is well-defined due to the linear nature of the admissible changes of coordinates on the decomposed  double supervector bundle. In natural local coordinates we have
$$G :=  h + \omega =  \dot{x}^a \dot{x}^b g_{ba}(x) + \zx^a \zx^b \omega_{ba}(x).$$
The Riemannian metric on $\Pi \sT M$ is then defined as the pull-back of $G$ by the splitting, i.e.,
\begin{equation}\label{eqn:LocMet}
g = \phi^*_h G  = \dot{x}^a \dot{x}^b g_{ba}(x) + \nabla \dot{x}^a \nabla \dot{x}^b \omega_{ba}(x).
\end{equation}
\end{construction} 
\smallskip 
\begin{remark} \label{rem:OtherConnections}
In the above construction, the Levi-Civita connection could be replaced by an arbitrary affine connection. One could, for example, use the symmetric connection associated with the almost symplectic structure. However, it is well-known that such connections are not unique. Thus, to have a canonical construction we take the Levi-Civita connection. We remark that the association of splittings of $\sT(\sT M)$ and affine connections was first presented by Dodson \& Radivoievici \cite{Dodson:1982}.
\end{remark}
Direct calculation shows that this  metric tensor on $\Pi \sT M$ can be written in natural local coordinates as
$$ g  = \dot{x}^a \dot{x}^b g_{ba}(x) + \rmd\dot{x}^a \rmd\dot{x}^b \omega_{ba}(x)
  + 2 \, \rmd\dot{x}^a \dot{x}^b\big(\rmd x^c \Gamma^d_{cb}(x)\,\omega_{da}(x)  \big) - \dot{x}^a \dot{x}^b \big( \rmd x^c \rmd x^d \Gamma^e_{da}(x) \Gamma^f_{bc}(x) \, \omega_{fe}(x) \big).
$$
By (locally) decomposing any vector field as $X = X^a(x, \rmd x)\frac{\partial}{\partial x^a} +  \bar{X}^a(x, \rmd x)\frac{\partial}{\partial \rmd x^a} \in \Vect(\Pi \sT M)$ (and similar for $Y$), we arrive at the local expression for the Riemannian metric
\begin{align}\label{eqn:LocPar}
\langle  X | Y\rangle_g & = X^a Y^b g_{ba} - X^a Y^b \big( \rmd x^c \rmd x^d \Gamma^e_{da} \Gamma^f_{bc} \, \omega_{fe} \big)\\
\nonumber  & + \big((-1)^{\widetilde{Y}}\, \bar{X}^a Y^b + (-1)^{\widetilde{X}(\widetilde{Y} +1)}\, \bar{Y}^a X^b \big) \rmd x^c \Gamma^d_{cb}\,\omega_{da}\\
 \nonumber &+ (-1)^{\widetilde{Y}} \, \bar{X}^a \bar{Y}^b \omega_{ba}.
\end{align} 
\noindent \textbf{Observations:}
\renewcommand{\theenumi}{\roman{enumi}}%
\begin{enumerate}
\item Comparing \eqref{eqn:LocPar} with Definition \ref{def:RiemMet}, we see that the parity is correct. The symmetry is similarly clear.  The non-degeneracy is follows  as both $h$ and $\omega$ are themselves non-degenerate. Linearity is also clear. Thus we do obtain a genuine Riemannian metric on $\Pi \sT M$.
\item The Riemannian metric $\langle - | -\rangle_g$ is inhomogeneous  with respect to the differential form degree, i.e., the natural $\mathbb{N}$-weight associated with the vector bundle structure of $ \Pi\sT M$.
\item The construction carries over with no modification to $h$ being a pseudo-Riemannian structure, for example we can consider Lorentzian metrics.
\end{enumerate}
\begin{definition}\label{def:SupSasMet}
Let $(M, h, \omega)$ be a smooth manifold  equipped with a Riemannian metric and an almost symplectic structure. The \emph{super-Sasaki metric} on $\Pi \sT M$  is  the Riemannian metric  given by Construction \ref{cons:RiemStru}.
\end{definition}
The local form of the Riemannian metric \eqref{eqn:LocMet} is almost identical to that of the Sasaki metric on the tangent bundle of a Riemannian manifold. In natural coordinates  $(x, \delta x, \dot{x}, \delta \dot{x})$ on $\sT (\sT M)$ the Sasaki metric, see \cite[(3.2)]{Sasaki:1958}, is 
$$g_S = \dot{x}^a \dot{x}^b \, g_{ba}(x) + D\dot{x}^a D\dot{x}^b g_{ba}(x)\,,$$
where $D\dot{x}^a :=  \delta \dot{x}^a + \delta x^b \dot{x}^c \Gamma^a_{cb}$. All the coordinates here are commuting, i.e., they are all  even coordinates. We have anticommuting coordinates on $\sT (\Pi \sT M)$ and so we require an antisymmetric structure on $M$ together with a Riemannian metric in order to generalise Sasaki's construction. This justifies our nomenclature in Definition \ref{def:SupSasMet}. An analogous construction in the context of symplectic geometry was given by Abdelhak, Boucetta \& Ikemakhen \cite{Abdelhak:2013}.
\begin{example}[Cartesian Space]
Consider $M= \R^2$, which we equip with standard Cartesian coordinates $(x,y)$. The Euclidean metric is $h = \dot{x}^2 + \dot{y}^2$, and in Cartesian coordinates the Christoffel symbols of the Levi-Civita connection are zero.  The canonical symplectic structure on $\R^2$ is, in the classical framework, $\omega = \rmd x \wedge \rmd y$. Note that
$$\sT (\Pi \sT \R^2) \cong \Pi \sT \R^2 \times_{\R^2} \sT \R^2 \times_{\R^2}  \Pi \sT \R^2 \cong \R^{4|4}. $$
We will employ natural (global) coordinates $(x,y, \zx, \eta, \dot{x}, \dot{y}, \dot{\zx} , \dot{\eta})$ on $\sT (\Pi \sT \R^2)$. Then the associated super-Sasaki metric is given by
$$g = \dot{x}^2 + \dot{y}^2 + 2\, \dot{\zx}\,\dot{\eta}\,, $$
which is the standard Euclidean  metric on the supermanifold $\R^{2|2}$. This example generalises verbatim to $\R^{2p| 2p}$ by considering the Euclidean metric and canonical symplectic structure on $\R^{2p}$. The super-Sasaki metric in this case is given by
$$g =  (\dot{x}^1)^2 + (\dot{x}^2)^2 + \cdots + (\dot{x}^{2p})^2 + 2 \, \dot{\zx}^1\,\dot{\eta}^1 + 2 \, \dot{\zx}^2\,\dot{\eta}^2 + \cdots + 2 \, \dot{\zx}^p\,\dot{\eta}^p\,.$$
This is again the standard super-Euclidean metric.
\end{example}
\begin{example}[Misner Space \cite{Misner:1967}]\label{exp:Misner}
Consider the infinite cylinder $M = \R \times S$, which we equip with coordinates $(t, \varphi)$, with $t \in \R$ and $\varphi \in [0,2 \pi)$. We give $M$ the structure of a pseudo-Riemannian  manifold by employing the Misner metric
$$h = 2 \, \dot{t} \, \dot{\varphi} + t \,(\dot{\varphi})^2\,.$$
We further equip Misner space  with the standard symplectic form, which in classical terms is $\omega = \rmd t \wedge \rmd \varphi$.  A standard calculation shows that the only non-zero Christoffel symbols (in these coordinates) are $\Gamma^t_{\varphi t} = \Gamma^t_{t \varphi } = \half$. Then in naturally induced coordinates 
\begin{align*}
\nabla \dot{t} = \rmd \dot{t}  + \half(\rmd t \, \dot{\varphi} + \rmd \varphi \, \dot{t}), && \nabla \dot{\varphi} = \rmd \dot{\varphi}  + \half(\rmd t \, \dot{\varphi} + \rmd \varphi \, \dot{t})\,.
\end{align*}
Following the construction of the super-Sasaki metric we see that
$$g = 2 \, \dot{t} \, \dot{\varphi} + t \,(\dot{\varphi})^2  + 2\,  \nabla \dot{t} \nabla \dot{\varphi}\,.$$
After a little algebra, the super-Sasaki metric on $\Pi \sT \R \times \Pi \sT S$ associated with the  Misner metric can be written as
$$g =  2 \, \dot{t} \, \dot{\varphi} + t \,(\dot{\varphi})^2  + 2 \rmd \dot{t}\, \rmd\dot{\varphi}  + (\rmd \dot{t}- \rmd\dot{\varphi})(\dot{\varphi} \, \rmd t + \dot{t}\, \rmd \varphi).$$
We remark that Misner space is used as a toy space-time as it shows many of the features of Taub-NUT space-time and other interesting space-times. In particular, Misner space has closed timelike curves and has served as a test-bed for exploring various conjectures in general relativity and semi-classical gravity (see for example Thorne \cite{Thorne:1993} and references therein).  This super-generalisation of Misner space deserves further study.
\end{example}

\begin{theorem}
Construction \ref{cons:RiemStru} is natural in the sense that if a diffeomorphism 
$$(M , h_M, \omega_M) \stackrel{\psi}{\longrightarrow} (N, h_N, \omega_N)$$
is an isometry and a symplectomorphism, then the induced diffeomorphism of supermanifolds
$$\Psi : \sT(\Pi \sT M) \longrightarrow  \sT(\Pi \sT N)$$
is an isometry, i.e., $\Psi^* g_N = g_M$.
\end{theorem}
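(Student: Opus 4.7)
The approach is to decompose Construction \ref{cons:RiemStru} into three building blocks and verify naturality of each: first, the functorial lift $\psi \mapsto \Psi$ between the respective double supervector bundles; second, the splitting $\phi_h$ induced by the Levi-Civita connection; and third, the tensor $G = h + \omega$ on the decomposed double supervector bundle.

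First I would record that the assignments $M \mapsto \sT M$ and $M \mapsto \Pi \sT M$ are functorial, so a diffeomorphism $\psi : M \to N$ canonically induces a diffeomorphism $\Psi : \sT(\Pi \sT M) \to \sT(\Pi \sT N)$, together with a compatible diffeomorphism $\Psi'$ between the decomposed double supervector bundles $\Pi \sT M \times_M \sT M \times_M \Pi \sT M$ and its counterpart over $N$. Both maps are determined in natural coordinates by the chain rule applied to $\psi$ and its first and second derivatives.

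The key step is the assertion that the two splittings intertwine, namely $\phi_{h_N} \circ \Psi = \Psi' \circ \phi_{h_M}$. This follows from the standard fact that an isometry between Riemannian manifolds intertwines the corresponding Levi-Civita connections (the uniqueness part of the fundamental theorem). In local coordinates the Christoffel symbols of $h_M$ equal the pullback under $\psi$ of those of $h_N$, so that $\Psi^{*}(\nabla^{N} \dot{y}^{a'}) = (\partial y^{a'}/\partial x^{a}) \, \nabla^{M} \dot{x}^{a}$, which is exactly what is needed. Once the splittings commute with $\Psi$, it remains to observe that $(\Psi')^{*} G_N = G_M$: on the two $\Pi\sT$-factors the symplectomorphism condition yields $\psi^{*}\omega_N = \omega_M$, and on the middle $\sT$-factor the isometry condition yields $\psi^{*} h_N = h_M$; the sum $G = h + \omega$ is thus manifestly preserved.

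Chaining these identities through functoriality of pullback gives
\[
\Psi^{*} g_N \;=\; \Psi^{*} \phi_{h_N}^{*} G_N \;=\; \phi_{h_M}^{*} (\Psi')^{*} G_N \;=\; \phi_{h_M}^{*} G_M \;=\; g_M,
\]
which is the desired conclusion. The principal technical obstacle is the intertwining of the splittings: while conceptually this is just the naturality of the Levi-Civita connection under isometries, one must carefully verify the transformation behaviour of $\nabla \dot{x}^{a}$ against the nonlinear, second-derivative piece of the admissible coordinate changes on $\sT(\Pi \sT M)$ listed at the start of Construction \ref{cons:RiemStru}. Once that intertwining is established, the remaining verifications are purely mechanical.
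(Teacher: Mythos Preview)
Your proposal is correct and follows essentially the same approach as the paper. The paper proves the theorem by a direct local-coordinate computation: it writes out $\Psi$ in natural coordinates, records the key identity $\Psi^{*}(\nabla \dot{y}^{\alpha}) = \nabla \dot{x}^{a}\,\partial y^{\alpha}/\partial x^{a}$ (your ``intertwining of the splittings''), and then reads off that $\Psi^{*}g_N = g_M$ reduces to the isometry and symplectomorphism conditions on $\psi$; your version simply packages the same steps in the language of functoriality and commuting squares.
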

\begin{proof}
We will prove the statement using local coordinates. Let us equip $M$ with local coordinates $x^a$ and $N$ with local coordinates $y^\alpha$.  A  diffeomorphism (or even just as smooth map) $\psi:  M \longrightarrow N$ is then locally given by $\psi^*y^\alpha = y^\alpha(x)$ (using the standard abuses of notation). The induced map $\Psi : \sT (\Pi \sT M) \longrightarrow  \sT (\Pi \sT N)$ comes from the application of the antitangent and tangent functors. Explicitly in natural coordinates, we have 
\begin{align*}
&y^{\alpha} = y^{\alpha}(x), & \rmd y^{\beta} = \rmd x^a \frac{\partial y^{\beta}(x)}{\partial x^a},\\
&\dot{y}^{\gamma} = \dot{x}^b \frac{\partial y^{\gamma}(x)}{\partial x^b},& \rmd\dot{y}^{\delta} = \rmd\dot{x}^{c}  \frac{\partial y^{\delta}(x)}{\partial x^c} + \dot{x}^{b} \rmd{x}^c \frac{\partial^2 y^{\delta}(x)}{ \partial x^c \partial x^b}.
\end{align*}
Note that by construction (using the transformation rules of Christoffel symbols and that we restrict attention to diffeomorphisms) that 
$$\Psi^*(\nabla \dot{y}^\alpha) = \nabla \dot{x}^a  \frac{\partial y^{\alpha}(x)}{\partial x^a}\,.$$
Then we insist on the following
$$\Psi^* g_N = \dot{x}^a \dot{x}^b  \frac{\partial y^\alpha}{\partial x^a} \frac{\partial y^\beta}{\partial x^b} \, g_{\beta \alpha}(y(x)) + \nabla\dot{x}^a \nabla \dot{x}^b  \frac{\partial y^\alpha}{\partial x^a} \frac{\partial y^\beta}{\partial x^b} \, \omega_{\beta \alpha}(y(x)) = g_M.$$
This means that we require 
\begin{align*}
g_{ba}(x) =  \frac{\partial y^\alpha}{\partial x^a} \frac{\partial y^\beta}{\partial x^b} \, g_{\beta \alpha}(y(x)), &&\textnormal{and}&& \omega_{ba}(x) = \frac{\partial y^\alpha}{\partial x^a} \frac{\partial y^\beta}{\partial x^b} \, \omega_{\beta \alpha}(y(x))\,,
\end{align*}
which is exactly the (local) statement that $\psi : M \longrightarrow N$ is an isometry and a symplectomorphism. 
\end{proof}
As is well-known, differential forms on $M$ are functions on the supermanifold $ \Pi \sT M$. Furthermore, the de Rham differential, the interior derivative and the Lie derivative can all be understood as vector fields on $\Pi \sT  M$. If $X \in \Vect(M)$, then (locally)
\begin{align*}
\rmd = \rmd x^a \frac{\partial}{\partial x^a}, && i_X = X^a\frac{\partial}{\rmd x^a}, && L_{X} = X^a\frac{\partial}{\partial x^a} + \rmd x^b \frac{\partial X^a}{\partial x^b} \frac{\partial}{\partial \rmd x^a}.
\end{align*}
The standard relations between these operators can easily be derived using this understanding in terms of vector fields on a supermanifold.
\begin{proposition}\label{prop:Cartan}
Let $(M, h, \omega)$ be a smooth manifold  equipped with a Riemannian metric and an almost symplectic structure, and let $\langle - | -\rangle_g$  be the associated super-Sasaki metric  on $\Pi \sT M$ (see Definition \ref{def:SupSasMet}). Let $X$ and $Y \in \Vect(M)$ be arbitrary vector fields on $M$.  Then we have the following identities:

\medskip 
\begin{enumerate}
 \setlength\itemsep{0.5em}
\item  $\langle i_X | i_Y  \rangle_g = \omega(X,Y) \in C^\infty(M)$,
\item $\langle i_X | \rmd  \rangle_g =0$,
\item $\langle \rmd | \rmd  \rangle_g=0$,
\item $ \langle L_X | \rmd  \rangle_g = X^\flat \in \Omega^1(M)$,
\item $\langle L_X | i_Y  \rangle_g = \omega(\nabla X, Y) \in \Omega^1(M)$,
\item  $\langle L_X | L_Y  \rangle_g  = \langle X| Y \rangle_h + \omega(\nabla X, \nabla Y) \in C^\infty(M) \oplus \Omega^2(M)$,
\end{enumerate}

\medskip

\noindent where  $\flat : \Vect(M) \rightarrow \Omega^1(M)$   is the standard musical isomorphism on the  Riemannian manifold $(M, h)$ and $\nabla$ is the Levi-Civita connection associated  with $h$.
\end{proposition}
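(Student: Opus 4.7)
The plan is to verify each identity by direct computation, exploiting the structure of Construction \ref{cons:RiemStru}. Rather than substituting the local expressions for $\rmd$, $i_X$, $L_X$ into the full pairing formula \eqref{eqn:LocPar}, I would use the more transparent observation that $g = \phi_h^\ast(h+\omega)$. Any vector field $V = V^a\partial_{x^a} + \bar V^a\partial_{\rmd x^a}$ on $\Pi\sT M$, regarded as a section of $\sT(\Pi\sT M) \to \Pi\sT M$, pushes forward under $\phi_h$ to a pair consisting of the $\sT M$-component $V^a$ and the $\Pi\sT M$-component $\bar V^a + \rmd x^b V^c \Gamma^a_{cb}$ (the latter being the pullback of $\phi_h^\ast \xi^a = \nabla\dot x^a$ along $V$). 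Since $G = h + \omega$ pairs only these two factors, $\langle V | W\rangle_g$ is the sum of an $h$-pairing on the $\sT M$-components and an $\omega$-pairing on the $\Pi\sT M$-components.

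Unpacking the three Cartan operators in this language, $\rmd = \rmd x^a\partial_{x^a}$ has $\sT M$-component $\rmd x^a$ and $\Pi\sT M$-component $\rmd x^b\rmd x^c\Gamma^a_{cb}$, which vanishes by torsion-freeness of $\nabla$; $i_X$ has $\sT M$-component $0$ and $\Pi\sT M$-component $X^a$; and, crucially, $L_X$ has $\sT M$-component $X^a$ and $\Pi\sT M$-component $\rmd x^b(\nabla_b X)^a$. The Christoffel correction inside $\nabla\dot x^a$ combines with the $\partial_b X^a$ appearing in $L_X$ to form precisely the covariant derivative of $X$; this is what produces $\nabla$ in identities (5) and (6).

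With these decompositions, identities (1)--(3) are immediate: in (1) only the $\omega$-pairing contributes and yields $\omega(X,Y)$; in (2) at least one of the two components is zero on each factor; in (3) the $h$-contribution vanishes by symmetry of $g$ against anticommutativity of $\rmd x$, and the $\omega$-contribution by the torsion-free vanishing noted above. Identity (4) reduces to $\rmd x^a g_{ab}X^b = X^\flat$, the musical isomorphism. Identity (5) is the $\omega$-pairing $\rmd x^b(\nabla_b X)^a Y^c \omega_{ac} = \omega(\nabla X, Y)$. Identity (6) splits cleanly into $h(X,Y) = \langle X | Y\rangle_h$ plus the $\omega$-pairing of $\rmd x^b(\nabla_b X)^a$ with $\rmd x^c(\nabla_c Y)^d$, which is $\omega(\nabla X,\nabla Y)$ viewed as a $2$-form on $M$.

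The main obstacle is the usual super-bookkeeping of $\Z_2$-signs: passing odd factors like $\rmd x^a$ past each other, and reconciling the antisymmetry of $\omega$ with the anticommutativity of the $\rmd x$'s. This is most delicate in (6), where one must combine the four terms appearing in \eqref{eqn:LocPar} and recognise the canonical expression for the $2$-form $\omega(\nabla X,\nabla Y)$ in the resulting sum.
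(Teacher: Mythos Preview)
Your argument is correct and is, at bottom, the same local computation carried out in the paper; the paper simply substitutes the components of $\rmd$, $i_X$, $L_X$ directly into \eqref{eqn:LocPar} and rearranges, whereas you first pass through the splitting $\phi_h$ and record the ``horizontal'' ($\sT M$) and ``vertical'' ($\Pi\sT M$) components of each operator before pairing. The practical difference is that your organisation absorbs the Christoffel terms into the vertical component once and for all (turning $\partial_b X^a$ into $(\nabla_b X)^a$ up front), so identities (5) and (6) drop out immediately, while the paper reaches the same endpoint by collecting the Christoffel contributions term-by-term inside \eqref{eqn:LocPar}. Your remark that the $\Pi\sT M$-component of $\rmd$ vanishes by torsion-freeness is exactly the paper's argument for (2), and your symmetry observation for (3) is the paper's as well. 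In short: same proof, slightly tidier packaging on your side.
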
 
\begin{proof}
We will prove the above proposition via direct computation using local coordinates and the local expression for Riemannian metric on $\Pi \sT M$, see \eqref{eqn:LocPar}. One needs to insert the components of the vector fields into the local expression and rearrange the expressions when needed. 
\smallskip
\begin{enumerate}
 \setlength\itemsep{0.7em}
\item  $\langle i_X | i_Y  \rangle_g =  - X^a Y^b \omega_{ba}$.
\item $\langle i_X | \rmd  \rangle_g = X^a \, \rmd x^b \rmd x^c \,\Gamma^d_{cb}\omega_{da}    =0$, where we have used the fact that the Christoffel symbols are symmetric in lower indices while $\rmd x^b \rmd x^c $ is antisymmetric. 
\item $\langle \rmd | \rmd  \rangle_g = -  \langle \rmd | \rmd  \rangle_g  =0$ from the symmetry properties of a Riemannian metric on a supermanifold.
\item $ \langle L_X | \rmd  \rangle_g = \rmd x^b g_{ba}X^a$.
\item $\langle L_X | i_Y  \rangle_g = Y^a X^b \rmd x^c \Gamma_{cb}^d \omega_{da} - \rmd x^c \frac{\partial X^a}{\partial x^c} Y^b\omega_{ba} = - \rmd x^c \big(\frac{\partial X^a}{\partial x^c} + X^d \Gamma^a_{dc} \big)Y^b \omega_{ba}$.
\item  $\langle L_X | L_Y  \rangle_g  =  X^a Y^b g_{ba} - X^a Y^b \big( \rmd x^c \rmd x^d \Gamma^e_{da} \Gamma^f_{bc} \, \omega_{fe} \big) + (\rmd X^a Y^b + \rmd Y^a X^b) \rmd x^c \Gamma^d_{cb}\omega _{da} + \rmd X^a \rmd Y^b \omega_{ba}$. Now we collect terms related to $\omega$ to give $\langle L_X | L_Y  \rangle_g  = X^a Y^b g_{ba} + \rmd x^c \left( \frac{\partial X^a}{\partial x^c } + X^d \Gamma^a_{dc}\right)  \rmd x^e \left( \frac{\partial Y^b}{\partial x^e } + Y^f \Gamma^b_{fe}\right)\omega_{ba}$.
\end{enumerate}
\end{proof}
\noindent \textbf{Observations:} As a supermanifold, we have the canonical projection $\epsilon : C^\infty(\Pi \sT M) \rightarrow C^\infty(M)$. In classical language, this is just projecting inhomogeneous differential forms to their degree zero part.  We see that
$$\epsilon(\langle L_X | L_Y\rangle_g ) = \langle X | Y\rangle_h,$$
and so we can recover the initial Riemannian metric on $M$. Similarly, though this does not require the projection, we recover the almost symplectic form from $\langle i_X | i_Y\rangle_g = \omega(X,Y)$.

\section{Concluding remarks}
 It is well-known that any almost symplectic manifold equipped with a Riemannian metric $(M, h , \omega)$ admits a canonical compatible almost complex structure $J$ defined via $\omega(X,Y) = \langle J X, Y \rangle_h$.  In local coordinates this amounts to $J_a^{\:\: b} = \omega_{ac}g^{cb}$. In fact, given any pair from the compatible triple $(h, \omega, J)$, one can canonically construct the remaining structure.  Thus, we could have started with, a Riemannian or almost symplectic manifold equipped with an almost complex structure. However, as it is the metric and and the almost symplectic structure that are used in the constrictions we chose to start from there.  However one views it, we are dealing with almost Hermitian manifolds, although the almost complex structure is not explicitly needed.  None-the-less,  the main result of this note can be paraphrased as follows:

\medskip 
 
 \noindent  \emph{If $M$ is an almost Hermitian manifold, then the supermanifold $\Pi \sT M$ is canonically a Riemannian supermanifold}.
 
 \medskip 

\section*{Acknowledgements}
The author cordially thanks Steven Duplij for his helpful comments earlier drafts of this work.


\begin{thebibliography}{10}
\begin{small}

\bibitem{Abdelhak:2013}
Abdelhak A., Boucetta M.,  Ikemakhen A
Symplectic structures on the tangent bundle of a smooth manifold, \href{https://arxiv.org/abs/1302.5886v1}{arXiv:1302.5886}.


\bibitem{Berezin:1976}
Berezin, F.A., Leites, D.A.,
 Supermanifolds, \href{http://www.mathnet.ru/php/archive.phtml?wshow=paper&jrnid=dan&paperid=39282&option_lang=eng}{\emph{Soviet Math. Dokl.}} \textbf{16} (1975), no. 5, 1218–1222 (1976).

\bibitem{Bruce:2020}
Bruce, A.J., Modular Classes of Q-Manifolds, Part II: Riemannian Structures \& Odd Killing Vectors Fields, \href{https://arxiv.org/abs/2001.05701}{arXiv:2001.05701}.

\bibitem{Bruce:2017}
Bruce, A.J., Grabowska, K., Moreno, G.,
On a geometric framework for Lagrangian supermechanics,
\href{https://doi.org/10.3934/jgm.2017016}{\emph{J. Geom. Mech.}} \textbf{9} (2017), no. 4, 411--437,  \href{https://arxiv.org/abs/1606.02604}{arXiv:1606.02604}.



\bibitem{Carmeli:2011}
Carmeli, C., Caston L., Fioresi, R., \href{http://www.ems-ph.org/books/book.php?proj_nr=132}{Mathematical foundations of supersymmetry}, EMS Series of Lectures in Mathematics, \emph{European Mathematical Society} (EMS), Z\"{u}rich, 2011. xiv+287 pp. ISBN: 978-3-03719-097-5


\bibitem{DeWitt:1992}
DeWitt, B.,
\href{https://doi.org/10.1017/CBO9780511564000}{Supermanifolds},
Second edition, Cambridge Monographs on Mathematical Physics. \emph{Cambridge University Press}, Cambridge, 1992. xviii+407 pp. ISBN: 0-521-41320-6; 0-521-42377-5 



\bibitem{Dodson:1982}
Dodson, C.T.J., Radivoievici, M.S.,
Tangent and frame bundles of order two,
\emph{An. \c{S}tiin\c{t}. Univ. ``Al. I. Cuza'' Ia\c{s}i Sec\c{t}. I a Mat. (N.S.)} \textbf{28} (1982), no. 1, 63--71. 

\bibitem{Duplij:2004}
Duplij S., Siegel W., Bagger J., (editors)
\href{https://doi.org/10.1007/1-4020-4522-0}{Concise encyclopedia of supersymmetry and noncommutative structures in mathematics and physics}, \emph{Kluwer Academic Publishers}, Dordrecht, 2004. iv+561 pp. ISBN: 1-4020-1338-8 .


\bibitem{Galaev:2012}
Galaev, A.S.,
Irreducible holonomy algebras of Riemannian supermanifolds, 
\href{https://doi.org/10.1007/s10455-011-9299-4}{\emph{Ann. Global Anal. Geom.}} \textbf{42} (2012), no. 1, 1--27, \href{https://arxiv.org/abs/0906.5250}{arXiv:0906.5250}.


\bibitem{Garnier:2014}
 Garnier S.,  Kalus M., 
 A lossless reduction of geodesics on supermanifolds to non-graded differential geometry,
 \href{http://dx.doi.org/10.5817/AM2014-4-205}{\emph{Arch. Math. (Brno)}} \textbf{50} (2014), no. 4, 205--218, \href{https://arxiv.org/abs/1406.5870}{arXiv:1406.5870 }.


\bibitem{Garnier:2012}
Garnier,S., Wurzbacher, T.,
The geodesic flow on a Riemannian supermanifold,
\href{https://doi.org/10.1016/j.geomphys.2012.02.002}{\emph{J. Geom. Phys.} }\textbf{62} (2012), no. 6, 1489--1508, \href{https://arxiv.org/abs/1107.1815}{arXiv:1107.1815}.

\bibitem{Goertsches:2008}
Goertsches, O.,
Riemannian supergeometry,
\href{https://doi.org/10.1007/s00209-007-0288-z}{\emph{Math. Z.}} \textbf{260} (2008), no. 3, 557--593, \href{https://arxiv.org/abs/math/0604143}{arXiv:math/0604143}.


 
 \bibitem{Groeger:2014}
 Groeger, J.,
Killing vector fields and harmonic superfield theories,
\href{https://doi.org/10.1063/1.4895464}{\emph{J. Math. Phys.}} \textbf{55} (2014), no. 9, 093503, 17 pp, \href{https://arxiv.org/abs/1301.5474}{arXiv:1301.5474}.

\bibitem{Klaus:2019}
 Kalus M., 
 Non-split almost complex and non-split Riemannian supermanifolds,
 \href{http://dx.doi.org/10.5817/AM2019-4-229}{\emph{Arch. Math. (Brno)}} \textbf{55} (2019), no. 4, 229--238, \href{https://arxiv.org/abs/1501.07117}{arXiv:1501.07117}.
 
 \bibitem{Konieczna:1999}
 Konieczna, K., Urba\'{n}ski, P.,
Double vector bundles and duality,
\href{http://eudml.org/doc/248355}{\emph{Arch. Math. (Brno)}} \textbf{35} (1999), no. 1, 59--95, \href{https://arxiv.org/abs/dg-ga/9710014}{arXiv:dg-ga/9710014}.
 
 
 
 \bibitem{Leites:1980}
Leites D.A.,  Introduction to the theory of supermanifolds,   \href{https://doi.org/10.1070/RM1980v035n01ABEH001545}{\emph{Russ. Math. Surv.}} \textbf{35}  (1980), no. 1, 1--64.
 
 \bibitem{Libermann:1955}
 Libermann, P.,
Sur les structures presque complexes et autres structures infinit\'{e}simales r\'{e}guli\`{e}res,
\href{http://www.numdam.org/item?id=BSMF_1955__83__195_0}{\emph{Bull. Soc. Math. France}} \textbf{83} (1955), 195--224. 
 


\bibitem{Manin:1997}
Manin, Y.I., \href{http://link.springer.com/book/10.1007\%2F978-3-662-07386-5}{Gauge field theory and complex geometry}, Second edition, Fundamental Principles of Mathematical Sciences, 289. \emph{Springer-Verlag}, Berlin, 1997. xii+346 pp. ISBN: 3-540-61378-1 



\bibitem{McDuff:2017}
McDuff, D., Salamon,D.,
\href{https://doi.org/10.1093/oso/9780198794899.001.0001}{Introduction to symplectic topology},
Third edition, Oxford Graduate Texts in Mathematics. \emph{Oxford University Press, Oxford}, 2017. xi+623 pp. ISBN: 978-0-19-879490-5; 978-0-19-879489-9 

\bibitem{Misner:1967}
Misner, C.W., Taub-NUT space as a counterexample to almost anything, in Relativity Theory and Astrophysics I: Relativity and Cosmology, ed. J. Ehlers, Lectures in Applied Mathematics, Volume \textbf{8} (\emph{American Mathematical Society}), (1967) 160, \href{https://ntrs.nasa.gov/search.jsp?R=19660007407}{Technical report no. 529}.



 \bibitem{Monterde:1996}
Monterde, J., S\'{a}nchez-Valenzuela, O.A.,
The exterior derivative as a Killing vector field,
\href{https://doi.org/10.1007/BF02761099}{\emph{Israel J. Math.}} \textbf{93} (1997), 157--170. 
 
 
 \bibitem{Pettini:2007}
Pettini, M.,
\href{https://doi.org/10.1007/978-0-387-49957-4}{Geometry and topology in Hamiltonian dynamics and statistical mechanics}, Interdisciplinary Applied Mathematics, 33. \emph{Springer, New York}, 2007. xvi+451 pp. ISBN: 978-0-387-30892-0. 

\bibitem{Pradines:1974}
Pradines, J.,
Repr\'{e}sentation des jets non holonomes par des morphismes vectoriels doubles soud\'{e}s,
\emph{C. R. Acad. Sci. Paris S\'{e}r. A} \textbf{278} (1974), 1523--1526. 


\bibitem{Sasaki:1958}
Sasaki,S., On the differential geometry of tangent bundles of Riemannian manifolds \href{https://doi.org/10.2748/tmj/1178244668}{\emph{Tohoku Math. J. (2)}} \textbf{10} (1958), 338--354.


\bibitem{Thorne:1993}
Thorne, K.S.,
Closed timelike curves, in General relativity and gravitation 1992 (C\'{o}rdoba), 295--315, \emph{Inst. Phys., Bristol}, 1993, \href{https://www.cco.caltech.edu/~kip/scripts/ClosedTimelikeCurves-II121.pdf}{Caltec Preprint GRP-340}.


\bibitem{Vaisman:2013}
Vaisman, I.,
Hamiltonian vector fields on almost symplectic manifolds,
\href{https://doi.org/10.1063/1.4820397}{\emph{J. Math. Phys.}} \textbf{54} (2013), no. 9, 092902, 11 pp, \href{https://arxiv.org/abs/1210.7949}{arXiv:1210.7949}.


\bibitem{Varadarajan:2004}
Varadarajan, V.S., \href{http://bookstore.ams.org/cln-11}{Supersymmetry for mathematicians: an introduction},
Courant Lecture Notes in Mathematics, 11. New York University, Courant Institute of Mathematical Sciences, New York; \emph{American Mathematical Society}, Providence, RI, 2004. viii+300 pp. ISBN: 0-8218-3574-2.


\end{small}
\end{thebibliography}
\end{document}